\documentclass{amsart}%
\usepackage{amsmath}
\usepackage{amssymb}
\usepackage{amsfonts}
\usepackage{graphicx}%
\setcounter{MaxMatrixCols}{30}
\providecommand{\U}[1]{\protect\rule{.1in}{.1in}}
\newtheorem{theorem}{Theorem}
\theoremstyle{plain}

\newtheorem{corollary}{Corollary}

\newtheorem{definition}{Definition}
\newtheorem{example}{Example}

\newtheorem{lemma}{Lemma}

\newtheorem{proposition}{Proposition}
\newtheorem{remark}{Remark}

\numberwithin{equation}{section}
\begin{document}
\title[ ]{The Abel-Steffensen inequality in higher dimensions}
\author{Constantin P. Niculescu}
\address{The Academy of Romanian Scientists, Splaiul Independentei No. 54, Bucharest,
RO-050094, Romania.}
\curraddr{Department of Mathematics, University of Craiova, Craiova 200585, Romania}
\email{cpniculescu@gmail.com}
\date{}
\subjclass[2000]{Primary 26D15; Secondary 26B25, 91B30}
\keywords{Abel-Steffensen inequality, absolutely continuous function, convex function,
2d-increasing function, copula, Riemann-Stieltjes integral.}

\begin{abstract}
The Abel-Steffensen inequality is extended to the context of several
variables. Applications to Fourier analysis of several variables and
Riemann-Stieltjes integration are also included.

\end{abstract}
\maketitle

\section{Introduction}

Abel's partial summation formula is a polynomial identity that asserts that
every pair of families $(a_{k})_{k=1}^{n}$ and $(b_{k})_{k=1}^{n}$ of complex
numbers verifies the
\[
\sum_{k=1}^{n}a_{k}b_{k}=\sum_{k=1}^{n-1}\left[  (a_{k}-a_{k+1})\left(
\sum_{j=1}^{k}b_{j}\right)  \right]  +a_{n}\left(  \sum_{j=1}^{n}b_{j}\right)
\]
An immediate consequence is the following inequality, known as Abel's
inequality:
\begin{subequations}
\[
\left\vert \sum_{k=1}^{n}a_{k}b_{k}\right\vert \leq a_{1}\max_{1\leq m\leq
n}\left\vert \sum_{k=1}^{m}b_{k}\right\vert .
\]
whenever $a_{1}\geq a_{2}\geq\cdots\geq a_{n}\geq0$ and $b_{1},b_{2}%
,...,b_{n}\in\mathbb{C}.$ It is this inequality that allowed Abel
\cite{Ab1826} to prove his well known test of convergence for signed series.
See Choudary and Niculescu \cite{CN2014}, for a complete account concerning
the contribution of Abel to this matter.

One hundred years later, Steffensen \cite{St1919} noticed another useful
consequence of Abel's partial summation formula, that will be referred to as
the Abel-Steffensen inequality: if
\end{subequations}
\[
a_{1}\geq a_{2}\geq\cdots\geq a_{n}\geq0\text{ and }\sum_{k=1}^{j}b_{k}%
\geq0\text{ for all }j\in\{1,2,...,n\},\text{ }%
\]
then
\[
\sum_{k=1}^{n}a_{k}b_{k}\geq0.
\]

Using this inequality, Steffensen succeeded to extend Jensen's inequality for
convex functions beyond the framework of positive measures.

As was noticed by Hardy \cite{H1904}, Abel's partial summation formula can be
easily extended to the case of double series as follows:%
\begin{align}
\sum_{i=1}^{p}\sum_{j=1}^{q}a_{ij}u_{ij}  &  =\sum_{i=1}^{p-1}\sum_{j=1}%
^{q-1}\Delta_{ij}\left(  \sum_{k=1}^{i}\sum_{l=1}^{j}u_{kl}\right)
+\sum_{i=1}^{p-1}\Delta_{iq}\left(  \sum_{k=1}^{i}\sum_{l=1}^{q}u_{kl}\right)
\tag{$H$}\label{HA}\\
&  +\sum_{j=1}^{q-1}\Delta_{pj}\left(  \sum_{k=1}^{p}\sum_{l=1}^{j}%
u_{kl}\right)  +a_{pq}\left(  \sum_{k=1}^{p}\sum_{l=1}^{q}u_{kl}\right)
,\nonumber
\end{align}
where%
\[
\Delta_{ij}=\left\{
\begin{array}
[c]{cl}%
a_{i,j}-a_{i+1,j}-a_{i,j+1}+a_{i+1,j+1} & \text{if }1\leq i<p,\text{ }1\leq
j<q\\
a_{i,q}-a_{i+1,q} & \text{if }1\leq i<p,\text{ }j=q\\
a_{p,j}-a_{p,j+1} & \text{if }i=p,\text{ }1\leq j<q.
\end{array}
\right.
\]

An immediate consequence is the Abel-Steffensen inequality for double sums:

\begin{theorem}
\label{thmAbelineqdis}If $\mathbf{a}=(a_{ij})_{i,j}$ and $\mathbf{u}%
=(u_{ij})_{i,j}$ are two double sequences of real numbers such that%
\[
a_{ij}\geq0,\text{ }\Delta_{ij}\geq0\text{ and }\sum_{k=1}^{i}\sum_{l=1}%
^{j}u_{kl}\geq0
\]
for all $i\in\{1,2,...,p\}$ and $j\in\{1,2,...,q\}$, then%
\[
\sum_{i=1}^{p}\sum_{j=1}^{q}a_{ij}u_{ij}\geq0.
\]

\end{theorem}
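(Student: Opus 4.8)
The plan is to derive the inequality directly from Hardy's partial summation identity $(H)$. Its right-hand side is already displayed as a sum of four blocks, and under the three hypotheses each block turns out to be a sum of products of two nonnegative numbers; so nothing new has to be proved and the argument is pure sign bookkeeping, with the identity doing all the work. To make this transparent I would first abbreviate the rectangular partial sums by
\[
U_{ij}=\sum_{k=1}^{i}\sum_{l=1}^{j}u_{kl},\qquad 1\leq i\leq p,\ 1\leq j\leq q,
\]
so that the last hypothesis reads simply $U_{ij}\geq0$ for all such $i$ and $j$.

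With this notation $(H)$ becomes
\[
\sum_{i=1}^{p}\sum_{j=1}^{q}a_{ij}u_{ij}=\sum_{i=1}^{p-1}\sum_{j=1}^{q-1}\Delta_{ij}U_{ij}+\sum_{i=1}^{p-1}\Delta_{iq}U_{iq}+\sum_{j=1}^{q-1}\Delta_{pj}U_{pj}+a_{pq}U_{pq},
\]
and I would check the four groups one at a time. In the first double sum, for $1\leq i<p$ and $1\leq j<q$ the factor $\Delta_{ij}$ is $\geq0$ by hypothesis and $U_{ij}\geq0$, so that block is $\geq0$. In the second sum $\Delta_{iq}=a_{iq}-a_{i+1,q}\geq0$ — this is the $j=q$ case in the definition of $\Delta_{ij}$, still covered by the hypothesis — and $U_{iq}\geq0$; the third sum is handled symmetrically with $\Delta_{pj}=a_{pj}-a_{p,j+1}\geq0$ and $U_{pj}\geq0$. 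Finally the corner term is the product of $a_{pq}\geq0$ and $U_{pq}\geq0$. Summing the four nonnegative contributions gives $\sum_{i,j}a_{ij}u_{ij}\geq0$, which is the claim.

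The only point that needs a little care — and the closest thing here to an obstacle — is to read "$\Delta_{ij}\geq0$ for all $i,j$" through the \emph{piecewise} definition of $\Delta_{ij}$, so that it simultaneously controls the mixed second differences $a_{ij}-a_{i+1,j}-a_{i,j+1}+a_{i+1,j+1}$ in the interior and the single differences on the top and right edges; once that is granted, every summand of $(H)$ is visibly a product of nonnegatives and there is nothing left to do. It is also worth remarking, as a consistency check against Steffensen's one-dimensional statement, that among the $a_{ij}$ only $a_{pq}\geq0$ is actually used here, exactly as only $a_{n}\geq0$ enters the classical proof; the positivity of the remaining $a_{ij}$ is in fact a consequence of the other hypotheses, since a straightforward telescoping of the $\Delta$'s over the subrectangle with corners $(i,j)$ and $(p,q)$ gives $a_{ij}=\sum_{k=i}^{p-1}\sum_{l=j}^{q-1}\Delta_{kl}+\sum_{k=i}^{p-1}\Delta_{kq}+\sum_{l=j}^{q-1}\Delta_{pl}+a_{pq}\geq0$.
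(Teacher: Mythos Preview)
Your argument is correct and is exactly the approach the paper has in mind: the theorem is stated there as ``an immediate consequence'' of Hardy's identity $(H)$, with no further proof given, and your sign-by-sign reading of the four blocks of $(H)$ is precisely that immediate consequence. Your extra remark that only $a_{pq}\geq0$ is really needed (the remaining $a_{ij}\geq0$ following from the $\Delta$-hypotheses by telescoping) is a nice bonus and consistent with the one-dimensional case.
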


The property $\Delta_{ij}\geq0$ of the double sequence $\mathbf{a}%
=(a_{ij})_{i,j}$ represents a 2-dimensional analogue of the usual condition of
downward monotonicity for real sequences. In what follows we will refer to it
as the property of $2d$-monotone decreasing.

Hardy used the formula ($H$) to extend the Abel-Dirichlet criterion of
convergence to the case of multiple series such as%
\[
\sum_{m,n\geq1}\frac{\sin\left(  mx+ny\right)  }{\sqrt{m+n}}.
\]
His argument (inspired by the 1-dimensional case) combined the boundedness of
the partial sums of the series $\sum_{m,n\geq1}\sin\left(  mx+ny\right)  $
with the fact that the double sequence $(\left(  m+n\right)  ^{-1/2}%
)_{_{m,n\geq1}}$ is $2d$-monotone decreasing (see Lemma \ref{lem2d} below).

In 1986, David and Jonathan Borwein \cite{BB1986} sketched the necessary
formalism for defining the general concept of $Nd$-monotonicity (for sequences
$f:\mathbb{N}^{N}\rightarrow\mathbb{R})$ and established the extension of
Leibniz test of convergence to the framework of alternating multiple series.
Their work was motivated by the case of a chemical lattice sum,
\[
\sum_{m,n,p\geq1}\frac{(-1)^{m+n+p}}{\sqrt{m^{2}+n^{2}+p^{2}}},
\]
representing the so called Madelung's constant for sodium chloride.

The aim of this paper is to prove an integral analogue of Theorem
\ref{thmAbelineqdis} in the setting of $2d$-monotone functions and to outline
several consequences of it to Fourier analysis and Riemann-Stieltjes integral
of several variables. See Theorem \ref{thmAbelintegral}, Section 3. For
reader's convenience we summarized in Section 2 the main features of
2d-monotonicity, the natural analogue of monotonicity for functions of two
real variables.

\section{$2d$-monotone functions}

Let $A=I\times J$ be a rectangle in $\mathbb{R}^{2}$ (whose sides $I$ and $J$
are intervals parallel to the coordinate axes).

\begin{definition}
A function $f:I\times J\rightarrow\mathbb{R}$ is called $2d$-\emph{monotone}
if the $f$-measure of every compact subinterval $A=\left[  a,b\right]
\times\left[  c,d\right]  \subset I\times J$ is nonnegative, that is,
\begin{equation}
\lbrack f;A]=f(a,c)-f(a,d)-f(b,c)+f(b,d)\geq0. \label{2-incr}%
\end{equation}

The function $f$ is called $2d$-\emph{monotone increasing/decreasing }if $f$
is $2d$-monotone and also\emph{\ }increasing/decreasing in each variable (when
the other is kept fixed).

The function $f$ is called $2d$-\emph{alternating} if $-f$ is $2d$-monotone.
\end{definition}

The terminology introduced by Definition 1 is motivated by the fact that any
$2d$-monotone function\emph{\ }$f$ verifies an inequality of the type
\begin{equation}
\lbrack f;A]\leq\lbrack f;B] \label{2-incr'}%
\end{equation}
for all compact subintervals $A,B\subset I\times J,$ with $A\subset B.$

\begin{remark}
One can easily show that $f$ is $2d$-monotone if and only if for every
interval $A=\left[  a,b\right]  \times\left[  c,d\right]  \subset I\times J$
the function $\Delta_{c}^{d}f(x,y)=:f(x,d)-f(x,c)$ is increasing on $[a,b]$
and the function $\Delta_{a}^{b}f(x,y)=f(b,y)-f(a,y)$ is increasing on
$[c,d].$ Notice that%
\[
\lbrack f;A]=\Delta_{c}^{d}\Delta_{a}^{b}f=\Delta_{a}^{b}\Delta_{c}^{d}f.
\]

\end{remark}

\begin{example}
\label{ex1}The product $f(x)g(y)$ of any pair of increasing/decreasing
functions $f:I\rightarrow\mathbb{R}$ and $g:J\rightarrow\mathbb{R}$ is
$2d$-monotone increasing on $I\times J$; if the functions have opposite
monotonicity, then their product is $2d$-alternating. Finite linear
combinations $\sum_{k}c_{k}f_{k}(x)g_{k}(y),$ \emph{(}with positive
coefficients\emph{)} of such functions have the same nature.
\end{example}

Calculus offers the following useful criterion of $2d$-monotonicity:

\begin{lemma}
\label{lem2d}Assume that $f$ is a continuously differentiable function on a
rectangle $I\times J$ which admits a continuous second order partial
derivative $\frac{\partial^{2}f}{\partial x\partial y}.$ Then $f$ is
$2d$-monotone if and only if%
\begin{equation}
\frac{\partial^{2}f}{\partial x\partial y}\geq0\text{.} \label{diff2-incr}%
\end{equation}

\end{lemma}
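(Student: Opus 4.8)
The plan is to reduce the $2d$ statement to the $1d$ situation via the characterization in the Remark. By that Remark, $f$ is $2d$-monotone on $I\times J$ precisely when, for every subrectangle $[a,b]\times[c,d]$, the auxiliary function $\Delta_c^d f(x,\cdot)=f(x,d)-f(x,c)$ is increasing in $x$ on $[a,b]$ and $\Delta_a^b f(\cdot,y)=f(b,y)-f(a,y)$ is increasing in $y$ on $[c,d]$. So I would first prove the easy implication: if $\partial^2 f/\partial x\partial y\ge 0$ throughout, then $\frac{d}{dx}\Delta_c^d f(x,\cdot)=\int_c^d \frac{\partial^2 f}{\partial x\partial y}(x,t)\,dt\ge 0$ by differentiating under the integral sign (legitimate since the mixed partial is continuous), hence $\Delta_c^d f$ is increasing in $x$; symmetrically $\Delta_a^b f$ is increasing in $y$. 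Thus $[f;A]\ge 0$ for every subrectangle, i.e. $f$ is $2d$-monotone. Alternatively, and perhaps more transparently, one can write directly
\[
[f;A]=\int_a^b\!\!\int_c^d \frac{\partial^2 f}{\partial x\partial y}(s,t)\,dt\,ds,
\]
which is the two-dimensional fundamental theorem of calculus applied to the $C^1$ function $f$ with continuous mixed partial; nonnegativity of the integrand then immediately gives $[f;A]\ge 0$.

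For the converse, I would argue by contradiction. Suppose $\frac{\partial^2 f}{\partial x\partial y}(x_0,y_0)<0$ at some interior point $(x_0,y_0)$ of $I\times J$. By continuity of the mixed partial there is a small closed rectangle $A=[x_0-\delta,x_0+\delta]\times[y_0-\delta,y_0+\delta]\subset I\times J$ on which $\frac{\partial^2 f}{\partial x\partial y}<0$. Applying the identity displayed above (or the integrated form of the Remark's condition) over this $A$ yields $[f;A]=\int\!\!\int_A \frac{\partial^2 f}{\partial x\partial y}<0$, contradicting $2d$-monotonicity. Since the mixed partial is continuous, its nonnegativity on the interior forces nonnegativity on all of $I\times J$. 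This handles both directions.

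The only genuine technical point — the ``main obstacle'', though it is mild — is justifying the passage between $[f;A]$ and the double integral of the mixed partial, i.e. the bivariate fundamental theorem of calculus. One clean way is iterated integration: for fixed $s$, $\int_c^d \frac{\partial^2 f}{\partial x\partial y}(s,t)\,dt=\frac{\partial f}{\partial x}(s,d)-\frac{\partial f}{\partial x}(s,c)$ by the one-variable FTC applied to $t\mapsto \frac{\partial f}{\partial x}(s,t)$ (continuous by hypothesis); then integrating this in $s$ over $[a,b]$ and applying the one-variable FTC again to $s\mapsto f(s,d)$ and $s\mapsto f(s,c)$ gives $f(b,d)-f(a,d)-f(b,c)+f(a,c)=[f;A]$. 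Fubini is not even needed since we never swap the order. With that identity in hand, both implications are immediate, so I would organize the writeup as: state the integral identity as the key lemma-within-the-proof, then deduce ``$\Leftarrow$'' by sign of the integrand and ``$\Rightarrow$'' by the contradiction argument above.
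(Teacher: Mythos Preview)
Your proof is correct and follows essentially the same route as the paper: both hinge on the identity $[f;A]=\int_a^b\int_c^d \frac{\partial^2 f}{\partial x\partial y}\,dy\,dx$, with the $\Leftarrow$ direction immediate from the sign of the integrand. The only differences are cosmetic---the paper explicitly cites Aksoy--Martelli to secure equality of the mixed partials (which your FTC step tacitly uses when you antidifferentiate $\frac{\partial^2 f}{\partial x\partial y}$ in $t$ to recover $\frac{\partial f}{\partial x}$), and for the converse the paper takes the direct limit $\frac{\partial^2 f}{\partial x\partial y}(u,v)=\lim_{h\to 0}h^{-2}[f;[u,u+h]\times[v,v+h]]\ge 0$ rather than your contradiction argument.
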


Under the assumptions of Lemma \ref{lem2d}, one can prove that the partial
derivative$\frac{\partial^{2}f}{\partial y\partial x}$ also exists and equals
$\frac{\partial^{2}f}{\partial x\partial y}.$ See \cite{AM}.

\begin{proof}
The implication (\ref{diff2-incr}) $\Rightarrow$ (\ref{2-incr}) follows by
integration and a remark due to Aksoy and Martelli \cite{AM} that asserts that
the continuous differentiability of $f$ together with the existence of a
continuous mixed derivative $\frac{\partial^{2}f}{\partial x\partial y}$ imply
the existence of the other mixed derivative and also their equality. Indeed,
for every compact subinterval $A=\left[  a,b\right]  \times\left[  c,d\right]
\subset I\times J$ we have%
\begin{align*}
0  &  \leq\int_{a}^{b}\int_{c}^{d}\frac{\partial^{2}f}{\partial x\partial
y}dydx\\
&  \leq\int_{a}^{b}\left[  \frac{\partial f}{\partial x}(x,d)-\frac{\partial
f}{\partial x}(x,c)\right]  dt=f(b,d)-f(a,d)-f(b,c)+f(a,c).
\end{align*}

Conversely, if the condition (\ref{2-incr}) is fulfilled, then by the
mean-value theorem we get%
\[
\frac{\partial^{2}f}{\partial x\partial y}(u,v)=\lim_{h\rightarrow0}%
\frac{f(u,v)-f(u,v+h)-f(u+h,v)+f(u+h,v+h)}{h^{2}}\geq0.
\]

\end{proof}

According to Lemma 1, the following functions are $2d$-monotone,
\begin{align*}
E(x,y)  &  =x^{2}+y^{2}\text{\quad}(\text{on }\mathbb{R}^{2}),\\
C(x,y)  &  =(e^{x}+e^{y}-1)^{-1}\text{\quad}(\text{on }\mathbb{R}_{+}^{2})\\
\Pi(x,y)  &  =xy\text{\quad}(\text{on }\mathbb{R}^{2}),
\end{align*}
while $h(x,y)=\log(x^{n}+y^{n})$ is $2d$-alternating on $(0,\infty
)\times(0,\infty)$ whenever $n\geq1.$

The convex functions give rise to $2d$-monotone functions in a natural manner.

\begin{example}
\label{ex2}Since any convex function $F:\mathbb{R\rightarrow R}$ verifies the
inequality
\[
(z-y)F(x)-(z-x)F(y)+(y-x)F(z)\geq0
\]
for all $x<y<z$ $($see \emph{\cite{NP2006}}, Lemma \emph{1.3.2}$),$ one can
attach to it a $2d$-monotone function on $\mathbb{R\times R}$ via the formula
$f(x,y)=-F(x-y).$

In a similar way, if $F:[0,\infty)\rightarrow\mathbb{R}$ is a convex function
and $\lambda>0$, then $f(x,y)=F(\lambda\left(  x+y\right)  )$ is a
$2d$-monotone function on $[0,\infty)\times\lbrack0,\infty).$
\end{example}

\begin{example}
\label{ex3}If $F:\mathbb{R}\rightarrow\mathbb{R}$ is a convex function, then
$f(x,y)=\frac{F(x)+F(y)}{2}-F\left(  \frac{x+y}{2}\right)  $ is $2d$%
-alternating on $\mathbb{R\times R}.$
\end{example}

Probability and statistics provide yet another major source of 2d-monotone
functions: the functions that couple multivariate distribution functions to
their one-dimensional marginal distribution functions, called copulas by Sklar.

\begin{definition}
\emph{(Hoeffding, Fr\'{e}chet and Sklar \cite{Sk1997})} A copula is a function
$C\left(  x,y\right)  :[0,1]\times\lbrack0,1]\rightarrow\lbrack0,1]$ that
plays the following two properties:

$(a)$ $($Boundary Conditions$)$ $C(x,0)=C(0,y)=0$ and $C(x,1)=x$, $C(1,y)=y$
for all $x,y\in\lbrack0,1];$

$(b)$ $(2d$-Monotonicity$)$ If $0\leq x_{1}\leq x_{2}\leq1$ and $0\leq
y_{1}\leq y_{2}\leq1$, then%
\[
C\left(  x_{2},y_{2}\right)  -C\left(  x_{1},y_{2}\right)  -C\left(
x_{2},y_{1}\right)  +C\left(  x_{1},y_{1}\right)  \geq0.
\]

\end{definition}

An important class of copulas is that of \emph{Archimedean copulas.} They are
constructed through a continuous, strictly decreasing and convex generator
$\varphi$ as%
\[
A_{\varphi}(u,v)=\varphi^{-1}\left(  \varphi(x)+\varphi(y)\right)  .
\]
Archimedean copulas represent examples of $2d$-monotone increasing functions.

It is worth noticing that copulas are in a one-to-one correspondence with
Markov operators $T:L^{\infty}\left(  [0,1]\right)  \rightarrow L^{\infty
}\left(  [0,1]\right)  $ (as well as with doubly stochastic measures on the
unit square $[0,1]\times\lbrack0,1]).$ Details can found in the book of Nelson
\cite{Ne2013}.

\section{The extension of Abel-Steffensen inequality}

The main feature of a positive measure is that the integral of a nonnegative
function is a nonnegative number. However, this property still works for some
signed measures when restricted to suitable subcones of the cone of positive
integrable functions. This phenomenon, first illustrated by the
Abel-Steffensen inequality in dimension 1, received a great deal of attention
during the last two decades. See \cite{NS2017} and references therein. The
following result provides an integral analogue of Abel-Steffensen inequality
in dimension 2. Its argument can be extended in an evident manner to cover all
dimensions, but details are too technical to be included here.

\begin{theorem}
\label{thmAbelintegral}Suppose that $f$ and $w$ are two real-valued functions
defined on $[a,b]\times\lbrack c,d]$ that fulfill the following two conditions:

$(i)$ $f$ is continuously differentiable and $2d$-monotone decreasing$;$

$(ii)$ $w$ is integrable and $W(x,y)=\int_{a}^{x}\int_{c}^{y}w(s,t)dtds\geq0$
for all $\left(  x,y\right)  \in\lbrack a,b]\times\lbrack c,d].$

Then%
\[
\int_{a}^{b}\int_{c}^{d}f(x,y)w(x,y)dydx\geq f(b,d)W(b,d).
\]

\end{theorem}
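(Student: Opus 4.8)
The plan is to mimic the one-dimensional Steffensen argument, where $\int a b\,\mathrm{d}x = aB\big|{} - \int B\,\mathrm{d}a$ and the monotonicity of $a$ forces the correction term to have a sign. Here the role of "summation by parts" is played by a two-dimensional integration by parts (Green's-type identity) against the iterated primitive $W$. First I would write $w(x,y) = \frac{\partial^2 W}{\partial x\partial y}(x,y)$, which holds for almost every $(x,y)$ by the Lebesgue differentiation theorem applied to the iterated integral defining $W$, and substitute this into $\int_a^b\int_c^d f\,w$. Integrating by parts once in $y$ and once in $x$ (legitimate because $f$ is $C^1$ and $W$ is absolutely continuous in each variable with the stated mixed structure) should produce the identity
\[
\int_a^b\int_c^d f(x,y)w(x,y)\,\mathrm{d}y\,\mathrm{d}x
= f(b,d)W(b,d) + \int_a^b\int_c^d \frac{\partial^2 f}{\partial x\partial y}(x,y)\,W(x,y)\,\mathrm{d}y\,\mathrm{d}x + (\text{boundary terms}).
\]

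The next step is to check that the boundary terms that appear along the four sides of the rectangle are all nonnegative, or better, vanish or have a favorable sign. Along the edges $x=a$ or $y=c$ we have $W=0$ by the definition of $W$, so those contributions drop out. Along $x=b$ and $y=d$ the boundary integrals will involve $W(b,y)$, $W(x,d)$ together with partial derivatives of $f$; since $f$ is $2d$-monotone \emph{decreasing}, $\frac{\partial f}{\partial x}\le 0$ and $\frac{\partial f}{\partial y}\le 0$, and $W\ge 0$, so each of these one-dimensional integrals is nonnegative. (One has to be careful to collect the terms correctly so that the "constant" part of the double integration by parts is exactly $f(b,d)W(b,d)$ and everything else is manifestly $\ge 0$; keeping track of signs in the two-variable by-parts formula is the bookkeeping heart of the argument.) Finally, Lemma~\ref{lem2d} gives $\frac{\partial^2 f}{\partial x\partial y}\ge 0$ on $[a,b]\times[c,d]$, and since $W\ge 0$ by hypothesis $(ii)$, the remaining double integral $\int_a^b\int_c^d \frac{\partial^2 f}{\partial x\partial y}\,W\,\mathrm{d}y\,\mathrm{d}x$ is $\ge 0$. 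Adding up, $\int_a^b\int_c^d f w\,\mathrm{d}y\,\mathrm{d}x \ge f(b,d)W(b,d)$, as claimed.

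I expect the main obstacle to be the rigorous justification of the two-dimensional integration by parts under the stated regularity: $f$ is only assumed $C^1$ with a continuous mixed second derivative (so Lemma~\ref{lem2d} applies), while $w$ is merely integrable, so $W$ need not be $C^2$ and $\frac{\partial^2 W}{\partial x\partial y}=w$ only almost everywhere. The clean way around this is to \emph{avoid} differentiating $W$ twice: integrate by parts moving derivatives onto $f$ rather than onto $W$. Concretely, I would first fix $x$ and integrate $\int_c^d f(x,y)w(x,y)\,\mathrm{d}y$ by parts using the absolutely continuous function $y\mapsto \int_c^y w(x,t)\,\mathrm{d}t$, then integrate the result over $x\in[a,b]$ by parts again, using Fubini to recognize the iterated primitive as $W$. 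This keeps all manipulations at the level of absolutely continuous functions and Riemann–Stieltjes/Lebesgue integrals, where integration by parts is valid, and it is exactly the continuous counterpart of Hardy's identity $(H)$ with $\Delta_{ij}$ replaced by $\frac{\partial^2 f}{\partial x\partial y}\,\mathrm{d}x\,\mathrm{d}y$ and the partial sums replaced by $W$. A secondary, purely clerical point is to make sure the mixed partial that shows up is $\frac{\partial^2 f}{\partial x\partial y}$ and not its transpose — but the Aksoy–Martelli remark quoted after Lemma~\ref{lem2d} guarantees the two agree, so no difficulty arises there.
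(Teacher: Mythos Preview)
Your approach---iterated integration by parts to obtain Young's identity, then checking that each correction term is nonnegative---is exactly the paper's. The paper carries out precisely the computation you outline (integrate by parts in $y$, apply Fubini, integrate by parts in $x$), arriving at
\[
\int_a^b\!\!\int_c^d f\,w
= f(b,d)W(b,d) - \int_a^b\frac{\partial f(x,d)}{\partial x}\,W(x,d)\,dx - \int_c^d\frac{\partial f(b,y)}{\partial y}\,W(b,y)\,dy + \int_a^b\!\!\int_c^d W\,\frac{\partial^2 f}{\partial x\partial y}\,dy\,dx,
\]
and then reads off the inequality from the signs of the partials and of $W$.

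There is, however, one genuine gap in your regularity discussion. You write that ``$f$ is only assumed $C^1$ with a continuous mixed second derivative (so Lemma~\ref{lem2d} applies),'' but hypothesis $(i)$ gives only continuous differentiability; it does \emph{not} postulate the existence of $\frac{\partial^2 f}{\partial x\partial y}$. Consequently Lemma~\ref{lem2d} is not directly available, the Aksoy--Martelli remark does not apply (it needs one mixed partial to exist in order to produce the other), and the last double integral in the identity above is not yet defined. The regularity bottleneck is therefore in $f$, not in $W$: your proposed fix of moving derivatives onto $f$ rather than onto $W$ is indeed the correct way to handle $w\in L^1$, but it does not supply the missing second derivative of $f$. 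The paper closes this gap by a preliminary mollification step: replace $f$ by $f_n=\rho_n\ast f$ with a standard Dirac sequence $(\rho_n)$, so that $f_n$ is $C^\infty$ and inherits the $2d$-monotone decreasing property; establish the identity and the inequality for $f_n$; then pass to the limit. With that reduction in place, your argument goes through verbatim.
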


\begin{proof}
Using the technique of Dirac sequences of approximating continuous and
integrable functions $f$ by infinitely differentiable functions we may
restrict ourselves to the case where the hypothesis $(i)$ is replaced by the
following stronger condition:%
\[
(i^{\prime})\quad\left\{
\begin{array}
[c]{c}%
f\text{ is nonnegative and twice continuously differentiable}\\
\frac{\partial f(x,d)}{\partial x}\leq0,\text{ }\frac{\partial f(b,y)}%
{\partial y}\leq0\text{ and }\frac{\partial^{2}f(x,y)}{\partial x\partial
y}\geq0
\end{array}
\right.
\]
for all $\left(  x,y\right)  \in\lbrack a,b]\times\lbrack c,d]$. This can be
done by replacing the initial function $f$ with a convolution product
$f_{n}=\rho_{n}\ast f,$ where%
\begin{align*}
\rho_{n}(x,y)  &  =n^{2}\rho(nx,ny)\text{ for }n\geq1,\\
\rho(x,y)  &  =\left\{
\begin{array}
[c]{cl}%
c^{-1}\exp\left(  1/\left(  x^{2}+y^{2}-1\right)  \right)  & \text{if }%
x^{2}+y^{2}<1\\
0 & \text{if }x^{2}+y^{2}\geq1
\end{array}
\right.
\end{align*}
and
\[
c=\iint\nolimits_{x^{2}+y^{2}<1}\exp\left(  1/\left(  x^{2}+y^{2}-1\right)
\right)  dxdy.
\]

For details, see Choudary and Niculescu \cite{CN2014}, Theorem 11.7.6 (c), p. 404.

Now the proof can be completed by using an identity proved by W. H. Young in
\cite{Y1917}, p. 38 (and rediscovered later by Pe\v{c}ari\'{c} \cite{P1987}).
We provide here a shorter argument for Young's identity, based on repeated use
of integration by parts for absolutely continuous functions. Details
concerning the formula of integration by parts can be found in the monograph
of Hewitt and Stromberg \cite{HS}, Theorem 18.19, p. 287.

Using the aforementioned formula of integration by parts and Fubini's theorem
we have
\begin{multline}
\int_{a}^{b}\int_{c}^{d}f(x,y)w(x,y)dydx\tag{$Y1$}\label{Y1}\\
=\int_{a}^{b}\left[  f(x,d)\int_{c}^{d}w(x,y)dy-\int_{c}^{d}\left(  \int
_{c}^{y}w(x,t)dt\right)  \frac{\partial f(x,y)}{\partial y}dy\right]
dx\nonumber\\
=\int_{a}^{b}\left(  f(x,d)\int_{c}^{d}w(x,y)dy\right)  dx-\int_{a}^{b}\left[
\int_{c}^{d}\left(  \int_{c}^{y}w(x,t)dt\right)  \frac{\partial f(x,y)}%
{\partial y}dy\right]  dx\nonumber\\
=\int_{c}^{d}\left(  \int_{a}^{b}f(x,d)w(x,y)dx\right)  dy-\int_{c}^{d}\left[
\int_{a}^{b}\left(  \int_{c}^{y}w(x,t)dt\right)  \frac{\partial f(x,y)}%
{\partial y}dx\right]  dy\nonumber\\
=\int_{c}^{d}\left[  f(b,d)\int_{a}^{b}w(x,y)dx-\int_{a}^{b}\frac{\partial
f(x,d)}{\partial x}\left(  \int_{a}^{x}w(s,y)ds\right)  dx\right]
dy\nonumber\\
-\int_{c}^{d}\left[  \left(  \int_{a}^{b}\int_{c}^{y}w(x,t)dt\right)
\frac{\partial f(b,y)}{\partial y}\right]  dy\nonumber\\
+\int_{c}^{d}\int_{a}^{b}\left[  \left(  \int_{a}^{x}\int_{c}^{y}%
w(s,t)dtds\right)  \frac{\partial^{2}f(x,y)}{\partial x\partial y}dx\right]
dy\nonumber\\
=f(b,d)W(b,d)-\int_{a}^{b}\frac{\partial f(x,d)}{\partial x}%
W(x,d)dx\nonumber\\
-\int_{c}^{d}W(b,y)\frac{\partial f(b,y)}{\partial y}dy+\int_{a}^{b}\int
_{c}^{d}W(x,y)\frac{\partial^{2}f(x,y)}{\partial x\partial y}dydx.\nonumber
\end{multline}
Now the conclusion follows easily by taking into account the hypotheses
$(i^{\prime})$ and $(ii)$.
\end{proof}

\begin{remark}
\label{rem Abel1}An inspection of the argument of Theorem
\emph{\ref{thmAbelintegral}} shows that the conclusion still works if the
hypothesis $(i)$ is replaced by the following one:

$(i^{\prime})$ $f$ is nonnegative, continuous, $2d$-monotone decreasing and
admits a representation of the form
\begin{equation}
f(x,y)=f(a,c)+\int_{a}^{x}g_{1}(s)ds+\int_{c}^{y}g_{2}(t)dt+\int_{a}^{x}%
\int_{c}^{y}g(s,t)dtds, \tag{$AC$}\label{AC}%
\end{equation}
for suitable $g_{1}\in L^{1}\left(  [a,b]\right)  ,$ $g_{2}\in L^{1}\left(
[c,d]\right)  $ and $g\in L^{1}\left(  [a,b]\times\lbrack c,d]\right)  .$

The existence of the representation $(AC)$ is equivalent to the condition of
absolute continuity in the sense of Carath\'{e}odory. See \v{S}remr
\emph{\cite{S2010}}.
\end{remark}

Using a similar idea, one can prove the following companion of Theorem
\ref{thmAbelintegral}:

\begin{theorem}
\label{thmAbel2}Suppose that $f$ and $w$ are two real-valued functions defined
on $[a,b]\times\lbrack c,d]$ that fulfill the following conditions:

$(i)$ $f$ is nonnegative, continuously differentiable and $2d$-monotone
increasing$;$

$(ii)$ $w$ is integrable and $\widetilde{W}(x,y)=\int_{x}^{b}\int_{y}%
^{d}w(s,t)dsdt\geq0$ for all $\left(  x,y\right)  \in\lbrack a,b]\times\lbrack
c,d].$

Then%
\[
\int_{a}^{b}\int_{c}^{d}f(x,y)w(x,y)dxdy\geq f(a,c)\widetilde{W}(a,c).
\]

\end{theorem}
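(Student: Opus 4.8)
The plan is to mirror the proof of Theorem \ref{thmAbelintegral}, but to carry out the integrations by parts "from the right", so that the tails $\widetilde{W}$ appear in place of the corner integrals $W$. First I would reduce to the case where $f$ is \emph{twice} continuously differentiable, nonnegative, with $\frac{\partial f}{\partial x}\geq0$, $\frac{\partial f}{\partial y}\geq0$ on all of $[a,b]\times[c,d]$ and $\frac{\partial^{2}f}{\partial x\partial y}\geq0$. This reduction is achieved, exactly as before, by replacing $f$ with the mollifications $f_{n}=\rho_{n}\ast f$ built from the same Friedrichs kernel $\rho_{n}$; since $\rho_{n}\geq0$, convolution preserves nonnegativity, monotonicity in each variable (the partial derivatives of $f_{n}$ are the convolutions of the corresponding divided differences of $f$, hence remain $\geq0$) and $2d$-monotonicity, while the slight shrinking of the domain is dealt with as in Choudary and Niculescu \cite{CN2014}, Theorem 11.7.6.

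Next I would establish the "reverse" Young identity
\begin{align*}
\int_{a}^{b}\int_{c}^{d}f(x,y)w(x,y)\,dx\,dy
&=f(a,c)\widetilde{W}(a,c)+\int_{a}^{b}\frac{\partial f(x,c)}{\partial x}\,\widetilde{W}(x,c)\,dx\\
&\quad+\int_{c}^{d}\frac{\partial f(a,y)}{\partial y}\,\widetilde{W}(a,y)\,dy
+\int_{a}^{b}\int_{c}^{d}\frac{\partial^{2}f(x,y)}{\partial x\partial y}\,\widetilde{W}(x,y)\,dy\,dx.
\end{align*}
It is obtained by three successive integrations by parts together with Fubini's theorem, using at each stage an antiderivative of tail type. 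Writing $\int_{c}^{d}fw\,dy=-\int_{c}^{d}f(x,y)\,\partial_{y}\!\bigl(\int_{y}^{d}w(x,t)\,dt\bigr)\,dy$ and integrating by parts in $y$ (the boundary term at $y=d$ vanishes) produces $f(x,c)\int_{c}^{d}w(x,t)\,dt$ plus a remainder; then integrating each of these pieces in $x$ by the same device, i.e. using $\int_{c}^{d}w(x,t)\,dt=-\partial_{x}\widetilde{W}(x,c)$ and $\int_{y}^{d}w(x,t)\,dt=-\partial_{x}\widetilde{W}(x,y)$, and noting that $\widetilde{W}$ vanishes on the edge $x=b$, yields the four displayed terms. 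For the smooth functions obtained after the reduction this is plain calculus; if instead one wishes to argue directly under hypothesis $(i)$, or under the weaker Carathéodory-type hypothesis of Remark \ref{rem Abel1}, the very same computation goes through verbatim upon invoking the integration-by-parts formula for absolutely continuous functions (Hewitt and Stromberg \cite{HS}, Theorem 18.19).

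Finally, the conclusion is read off from the identity: since $f$ is $2d$-monotone increasing, $\frac{\partial f(x,c)}{\partial x}\geq0$ and $\frac{\partial f(a,y)}{\partial y}\geq0$ (monotonicity in each variable) and $\frac{\partial^{2}f(x,y)}{\partial x\partial y}\geq0$ (by Lemma \ref{lem2d}), while hypothesis $(ii)$ gives $\widetilde{W}\geq0$ throughout $[a,b]\times[c,d]$; hence the last three terms on the right-hand side are nonnegative and $\int_{a}^{b}\int_{c}^{d}fw\,dx\,dy\geq f(a,c)\widetilde{W}(a,c)$. I expect the only delicate point to be the bookkeeping of signs and of the vanishing boundary terms in this mirrored integration by parts — in particular, verifying that the surviving boundary contributions are precisely those evaluated at the lower-left corner $(a,c)$ and along the edges $x=a$ and $y=c$ — everything else being a routine transcription of the argument already given for Theorem \ref{thmAbelintegral}.
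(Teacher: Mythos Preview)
Your proposal is correct and follows essentially the same route as the paper: the paper states that the proof ``parallels that of Theorem \ref{thmAbelintegral}, using the following variant of Young's identity'' and then records precisely the identity $(Y2)$ you derive, after which the sign conditions on $\partial_{x}f(x,c)$, $\partial_{y}f(a,y)$, $\partial_{x}\partial_{y}f$ and $\widetilde{W}$ finish the argument. Your mollification reduction and the ``tail'' integration by parts with vanishing boundary terms at $x=b$ and $y=d$ are exactly the mirror of the paper's treatment of Theorem \ref{thmAbelintegral}.
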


As above, the continuous differentiability of $f$ can be relaxed to absolute continuity.

The proof of Theorem \ref{thmAbel2} parallels that of Theorem
\ref{thmAbelintegral}, using the following variant of Young's identity:%
\begin{multline}
\int_{a}^{b}\int_{c}^{d}f(x,y)w(x,y)dxdy=f(a,c)\widetilde{W}(a,c)+\int_{a}%
^{b}\widetilde{W}(x,c)\frac{\partial f(x,c)}{\partial x}dx\nonumber\\
+\int_{c}^{d}\widetilde{W}(a,y)\frac{\partial f(a,y)}{\partial y}dy+\int
_{a}^{b}\int_{c}^{d}\widetilde{W}(x,y)\frac{\partial^{2}f(x,y)}{\partial
x\partial y}dxdy. \tag{$Y2$}\label{Y2}%
\end{multline}

A well known classical result concerning the Fourier coefficients of a convex
function of one variable asserts that%
\[
\int_{0}^{2\pi}f(x)\cos nxdx\geq0\text{\quad for all positive integers }n.
\]
See \cite{NP2006}, Exercise 7, p. 26. As a consequence,%
\[
\int_{0}^{2\pi}\int_{0}^{2\pi}f(x,y)\cos mx\cos nydxdy\geq0.
\]
for all convex functions $f:[0,2\pi]\times\lbrack0,2\pi]\rightarrow\mathbb{R}$
and all positive integers $m$ and $n.$ When cosines are replaced by sines, the
corresponding inequality may fail even in the one variable case. For example,%
\[
\int_{0}^{2\pi}x^{2}\sin nxdx<0\text{\quad for all positive integers }n.
\]
However, based on Theorem \ref{thmAbelintegral} and Theorem \ref{thmAbel2} we
will prove the following result:

\begin{proposition}
For all monotone and convex functions $f:[0,4\pi]\rightarrow\mathbb{R}$ and
all positive integers $m$ and $n,$%
\[
\int_{0}^{2\pi}\int_{0}^{2\pi}f(s+t)\sin ms\sin ntdsdt\geq0.
\]

\end{proposition}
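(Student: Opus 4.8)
The plan is to apply the two-dimensional Abel--Steffensen inequalities (Theorems \ref{thmAbelintegral} and \ref{thmAbel2}) to the function $g(s,t)=f(s+t)$ on the square $[0,2\pi]\times[0,2\pi]$ and to the kernel $w(s,t)=\sin ms\sin nt$. First I would verify, as in Example \ref{ex2} (with $\lambda=1$), that $g$ is $2d$-monotone: for a subrectangle $[a,b]\times[c,d]$,
\[
[g;[a,b]\times[c,d]]=f(a+c)-f(a+d)-f(b+c)+f(b+d)\ge 0,
\]
since $a+c\le a+d\le b+d$, $\ a+c\le b+c\le b+d$ and $(a+d)+(b+c)=(a+c)+(b+d)$, which is precisely the convexity inequality for $f$; moreover $g$ is $2d$-monotone increasing when $f$ is increasing and $2d$-monotone decreasing when $f$ is decreasing. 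The second ingredient is the elementary computation $\int_0^x\sin ms\,ds=(1-\cos mx)/m$ and $\int_x^{2\pi}\sin ms\,ds=(\cos mx-1)/m$, whence for every $(x,y)\in[0,2\pi]^2$ both iterated primitives
\[
\int_0^x\int_0^y w(s,t)\,dt\,ds\quad\text{and}\quad\int_x^{2\pi}\int_y^{2\pi}w(s,t)\,dt\,ds
\]
equal $\frac{(1-\cos mx)(1-\cos ny)}{mn}\ge 0$; this is the crux, because $m,n$ being integers it vanishes at the two corners $(0,0)$ and $(2\pi,2\pi)$, and in particular $\int_0^{2\pi}\int_0^{2\pi}w(s,t)\,ds\,dt=0$.

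With these facts the two cases are immediate. If $f$ is decreasing I would apply Theorem \ref{thmAbelintegral} to $g$ and $w$: hypothesis $(ii)$ holds with $W(x,y)=\int_0^x\int_0^y w(s,t)\,dt\,ds\ge 0$, and the conclusion reads
\[
\int_0^{2\pi}\int_0^{2\pi}f(s+t)\sin ms\sin nt\,ds\,dt\ \ge\ g(2\pi,2\pi)\,W(2\pi,2\pi)=f(4\pi)\cdot\frac{(1-\cos 2\pi m)(1-\cos 2\pi n)}{mn}=0.
\]
If $f$ is increasing I would first add a constant to $f$ so that $f$, hence $g$, is nonnegative; this changes neither side of the desired inequality, because $\int_0^{2\pi}\int_0^{2\pi}w(s,t)\,ds\,dt=0$ and the boundary term of Theorem \ref{thmAbel2} is evaluated at $(0,0)$, where $\widetilde W=0$. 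Theorem \ref{thmAbel2} then gives
\[
\int_0^{2\pi}\int_0^{2\pi}f(s+t)\sin ms\sin nt\,ds\,dt\ \ge\ g(0,0)\,\widetilde W(0,0)=f(0)\cdot 0=0.
\]
In both cases the mechanism is the same: the boundary term that survives the Abel--Steffensen identity is anchored at a corner of the square where the primitive of the sine kernel vanishes, by periodicity.

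It remains to bring $g$ into the regularity class required by the two theorems, and this is the only point where I anticipate any friction. A convex function on $[0,4\pi]$ is continuous on the open interval, so I may first redefine $f$ at the two endpoints to make it continuous on all of $[0,4\pi]$, which alters neither the integral nor convexity nor monotonicity. Then I would reduce to smooth $f$ by approximation: precompose $f$ with a slight affine contraction of $[0,4\pi]$ into its interior — this makes $f$ Lipschitz, hence gives it finite one-sided derivatives at the endpoints, which is necessary since a plain mollification does not apply directly to a convex function that may have infinite endpoint derivatives — then extend the result past the endpoints by the corresponding tangent lines and mollify. Each step preserves convexity and monotonicity, and the resulting $C^\infty$ functions $f_k$ converge to $f$ uniformly on $[0,4\pi]$; since $\bigl|\int_0^{2\pi}\int_0^{2\pi}(f_k-f)(s+t)\sin ms\sin nt\,ds\,dt\bigr|\le 4\pi^2\|f_k-f\|_\infty$, the inequality established for each $f_k$ passes to the limit. (Invoking instead the absolutely continuous version in Remark \ref{rem Abel1} does not shortcut this, since for a piecewise-linear convex $f$ the second derivative is a singular measure and the representation $(AC)$ fails, so some smoothing seems unavoidable.) Everything else — the sign and corner-vanishing of the iterated primitives of $\sin ms\sin nt$ — is routine.
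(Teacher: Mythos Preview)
Your proposal is correct and follows essentially the same approach as the paper: compute the iterated primitives $W$ and $\widetilde W$ of $\sin ms\sin nt$, observe they are nonnegative and vanish at the relevant corners, and then invoke Theorem~\ref{thmAbelintegral} or Theorem~\ref{thmAbel2} according as $f$ is decreasing or increasing. You are in fact more thorough than the paper on the $2d$-monotonicity of $g(s,t)=f(s+t)$ and on the smoothing needed to meet the regularity hypotheses; the paper's own proof simply adds a constant to make $f\ge 0$ and then cites the two theorems, leaving those details implicit.
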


\begin{proof}
Adding to $f$ a suitable constant, we may assume that $f\geq0.$ Then notice
that%
\[
W\left(  x,y\right)  =\int_{0}^{x}\int_{0}^{y}\sin ms\sin ntdsdt=\frac{\left(
1-\cos mx\right)  \left(  1-\cos ny\right)  }{mn}\geq0
\]
and%
\[
\tilde{W}\left(  x,y\right)  =\int_{x}^{2\pi}\int_{y}^{2\pi}\sin ms\sin
ntdsdt=\frac{\left(  1-\cos mx\right)  \left(  1-\cos ny\right)  }{mn}\geq0
\]
for all $x,y\in\lbrack0,2\pi].$ The conclusion of the corollary follows from
Theorem \ref{thmAbelintegral} when $f$ is decreasing, and from Theorem
\ref{thmAbel2} when $f$ is increasing.
\end{proof}

\begin{remark}
An inspection of the proof of Theorem \emph{\ref{thmAbelintegral}} easily
shows that the conclusion remains valid when the hypotheses $(i)\&(ii)$ are
replaced by the following ones:

$(i^{\prime\prime})$ $f$ is continuously differentiable, $2d$-alternating and
its partial derivatives $\frac{\partial f(x,d)}{\partial x}$ and
$\frac{\partial f(b,y)}{\partial y}$ are nonnegative$;$

$(ii^{\prime\prime})$ $w$ is integrable and $W(x,y)=\int_{a}^{x}\int_{c}%
^{y}w(s,t)dsdt\leq0$ for all $\left(  x,y\right)  \in\lbrack a,b]\times\lbrack
c,d].$

\smallskip

An illustration is offered by the functions $f(x,y)=\ln\left(  x^{2}%
+y^{2}\right)  $ and $w(x,y)=-\sin\left(  x+y\right)  $ on $(0,3\pi
/4]\times(0,3\pi/4]:$%
\[
\int_{0}^{3\pi/4}\int_{0}^{3\pi/4}\ln\left(  x^{2}+y^{2}\right)  \sin\left(
x+y\right)  dxdy\leq\ln\frac{9\pi^{2}}{8}\int_{0}^{3\pi/4}\int_{0}^{3\pi
/4}\sin\left(  s+t\right)  dsdt.
\]

A similar remark works in the case of Theorem \emph{\ref{thmAbel2}}.
\end{remark}

\section{An application to Riemann-Stieltjes integral of two variables}

The Riemann-Stieltjes integrals provide a unified approach to the theory of
random variables and proved useful in many fields like stochastic calculus and
statistical inference.

In analogy with integration over $\mathbb{R}$, the $2d$-monotone functions
$f:A\rightarrow\mathbb{R}$ give rise to Riemann-Stieltjes integrals. One first
define the integral of a characteristic function of a subinterval
$[a,b]\times\lbrack c,d]\subset A$ by the formula%
\[
\iint\nolimits_{\mathbb{R}^{2}}\chi_{\lbrack a,b]\times\lbrack c,d]}%
df(x,y)=f(a,c)-f(a,d)-f(b,c)+f(b,d),
\]
and then extend this formula by linearity and positivity to the linear space
$\mathcal{S}t\left(  A\right)  $ of step functions, that is, to the linear
combinations of characteristic functions of bounded intervals included in $A.$
Thus%
\[
\left\vert \iint\nolimits_{A}h(x,y)df(x,y)\right\vert \leq\lbrack
f;A]\cdot\left\Vert h\right\Vert _{\infty}\text{ }%
\]
for every $h\in\mathcal{S}t\left(  A\right)  .$ Since the elements of
$C_{c}(A)$ (that is, the space of all real-valued continuous functions with
compact support included in $A$) are uniform limits of step functions, one can
easily show that $df(x,y)$ is actually a positive Radon measure on $A.$

Under certain circumstances, the Riemann-Stieltjes integral can be reduced to
Riemann integral. For example, when $f$ is of class $C^{1}$ and admits
continuous mixed derivatives $\frac{\partial^{2}f}{\partial x\partial y}%
=\frac{\partial^{2}f}{\partial y\partial x},$ then%
\[
\iint\nolimits_{A}h(x,y)df(x,y)=\iint\nolimits_{A}h(x,y)\frac{\partial^{2}%
f}{\partial x\partial y}dxdy
\]
for all $h$ in $C_{c}(A).$ The technique of Dirac sequences, makes possible to
approximate any Stieltjes integral by Riemann integrals of this form.
Precisely,
\[
\iint\nolimits_{A}h(x,y)df(x,y)=\lim_{n\rightarrow\infty}\iint\nolimits_{A}%
h(x,y)\frac{\partial^{2}}{\partial x\partial y}\left(  \rho_{n}\ast f\right)
dxdy,
\]
where $\rho\in C_{c}(\mathbb{R}^{2})$ is any nonnegative function such that
$\iint\nolimits_{\mathbb{R}^{2}}\rho(x,y)dxdy=1$ and $\rho_{n}(x,y)=n^{2}%
\rho(nx,ny)$ for $n\geq1.$ See Willem \cite{W2003}, Th\'{e}or\`{e}me 11.14, p.
57, for details.

Taking into account the above discussion, one can restate Theorem
\ref{thmAbelintegral} as a formula of integration by parts for the
Riemann-Stieltjes integral:

\begin{theorem}
\label{thm Abel-Stieltjes}Suppose that $f$ and $g$ are two real-valued
functions defined on $[a,b]\times\lbrack c,d]$ that fulfill the following conditions:

$(i)$ $f$ is continuously differentiable and $2d$-monotone$;$

$(ii)$ $g$ is absolutely continuous in the sense of Carath\'{e}odory.

Then $f$ is integrable with respect to $g$ and
\begin{multline*}
\int_{a}^{b}\int_{c}^{d}f(x,y)dg(x,y)=f(b,d)g(b,d)\\
-\int_{a}^{b}\frac{\partial f(x,d)}{\partial x}g(x,d)dx-\int_{c}^{d}%
\frac{\partial f(b,y)}{\partial y}g(b,y)dy+\int_{a}^{b}\int_{c}^{d}%
g(x,y)df(x,y).
\end{multline*}

\end{theorem}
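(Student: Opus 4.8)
The plan is to deduce Theorem \ref{thm Abel-Stieltjes} from the already-established Young-type identity (\ref{Y1}) by a routine translation between the Riemann-Stieltjes integral $\int f\,dg$ and the Riemann integral $\int g\,df$. The key observation is that under hypothesis $(ii)$, $g$ admits the Carath\'eodory representation $(AC)$, so its mixed ``second derivative'' in the distributional sense is an $L^1$ kernel, and the Radon measure $dg$ has a concrete description. Symmetrically, $f$ being $C^1$ and $2d$-monotone means $df$ is the positive Radon measure with density $\partial^2 f/\partial x\partial y$ (using the Aksoy-Martelli equality of mixed partials quoted after Lemma \ref{lem2d}).

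First I would reduce to the smooth case exactly as in the proof of Theorem \ref{thmAbelintegral}: replace $g$ by $g_n=\rho_n\ast g$, so that $dg_n = \dfrac{\partial^2 g_n}{\partial x\partial y}\,dx\,dy =: w_n(x,y)\,dx\,dy$ with $w_n$ integrable, and note that $\int_a^x\int_c^y w_n = g_n(x,y)$ up to the boundary terms coming from $(AC)$ (the one-variable integrals $\int g_1$ and $\int g_2$ and the constant $g(a,c)$). Then $\int f\,dg_n = \int f(x,y)w_n(x,y)\,dy\,dx$ is an ordinary double integral, and I can apply identity (\ref{Y1}) \emph{verbatim} with $w=w_n$ and $W(x,y) = \int_a^x\int_c^y w_n$. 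The four terms produced by (\ref{Y1}) are precisely $f(b,d)W(b,d)$, $-\int \frac{\partial f(x,d)}{\partial x}W(x,d)\,dx$, $-\int W(b,y)\frac{\partial f(b,y)}{\partial y}\,dy$, and $\int\int W(x,y)\frac{\partial^2 f}{\partial x\partial y}\,dy\,dx$. Replacing $W$ by $g_n$ (tracking the boundary corrections) and recognizing $\frac{\partial^2 f}{\partial x\partial y}\,dy\,dx = df$ turns this into exactly the claimed formula with $g_n$ in place of $g$.

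Next I would pass to the limit $n\to\infty$. On the left, $f$ is continuous hence bounded on the compact rectangle, and $dg_n\to dg$ weakly (this is the Dirac-sequence convergence of Stieltjes integrals quoted from Willem \cite{W2003} in the paragraph preceding the theorem), so $\int f\,dg_n\to\int f\,dg$; this simultaneously justifies the integrability of $f$ with respect to $g$. On the right, the boundary terms $f(b,d)g_n(b,d)$, $\int\frac{\partial f(x,d)}{\partial x}g_n(x,d)\,dx$ and $\int\frac{\partial f(b,y)}{\partial y}g_n(b,y)\,dy$ converge by uniform (or dominated) convergence of $g_n$ to $g$ on the relevant edges, using that $g$, being absolutely continuous in Carath\'eodory's sense, is continuous there; the interior term $\int\int g_n\,df\to\int\int g\,df$ since $df$ is a finite positive measure (total mass $[f;A]$) and $g_n\to g$ uniformly on the rectangle.

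The main obstacle is the bookkeeping in the limit, specifically making sure the Carath\'eodory representation $(AC)$ of $g$ interacts correctly with the convolution: $\rho_n\ast g$ must be shown to converge to $g$ uniformly on $[a,b]\times[c,d]$ (not merely in $L^1$), which uses continuity of $g$, and its mixed partial $w_n$ must be identified so that $\int_a^x\int_c^y w_n$ genuinely reconstructs $g_n$ minus its lower-edge and left-edge data. Once one is willing to quote \v{S}remr \cite{S2010} for the structure of Carath\'eodory-absolutely-continuous functions and Willem \cite{W2003} for the weak convergence of the approximating Stieltjes measures, the argument is entirely parallel to that of Theorem \ref{thmAbelintegral}, and I would present it compactly rather than re-deriving (\ref{Y1}).
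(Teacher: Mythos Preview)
Your approach is essentially the paper's: the paper does not give Theorem~\ref{thm Abel-Stieltjes} a separate proof but simply declares it a restatement of Young's identity~(\ref{Y1}) from the proof of Theorem~\ref{thmAbelintegral}, read through the Riemann--Stieltjes framework and the Dirac-sequence approximation described just before the theorem. Your proposal follows exactly this line, merely supplying more of the routine details (mollifying $g$, identifying $W$ with $g_n$ up to edge terms, and passing to the limit).
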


For $g(x,y)=\left(  x-\lfloor x\rfloor+1\right)  \left(  y-\lfloor
y\rfloor+1\right)  ,$ Theorem \ref{thm Abel-Stieltjes} yields a formula
relating double sums and double integrals

\begin{corollary}
\label{cor Abel-Stieltjes dis}If $f(x,y)$ is a continuously differentiable
function that admits a continuous second order partial derivative$\frac
{\partial^{2}f}{\partial x\partial y}$ in the rectangle $\left[  a,b\right]
\times\lbrack c,d]$, where $a,b,c,d\in\mathbb{Z},$ then%
\begin{multline*}
\sum_{a<m\leq b}\sum_{c<n\leq d}f(m,n)=\int_{a}^{b}\int_{c}^{d}f(x,y)dxdy+\int
_{a}^{b}\int_{c}^{d}\frac{\partial f(x,y)}{\partial x}\left(  x-\lfloor
x\rfloor\right)  dxdy\\
+\int_{a}^{b}\int_{c}^{d}\frac{\partial f(x,y)}{\partial y}\left(  y-\lfloor
y\rfloor\right)  dxdy+\int_{a}^{b}\int_{c}^{d}\frac{\partial^{2}%
f(x,y)}{\partial x\partial y}\left(  x-\lfloor x\rfloor\right)  \left(
y-\lfloor y\rfloor\right)  dxdy.
\end{multline*}

\end{corollary}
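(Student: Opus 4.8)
The plan is to obtain the identity from Theorem~\ref{thm Abel-Stieltjes} with the integrator $g(x,y)=(x-\lfloor x\rfloor+1)(y-\lfloor y\rfloor+1)$, which after separating variables amounts to two uses of the one-variable formula
\[\sum_{a<m\le b}h(m)=\int_a^b h(x)\,dx+\int_a^b h'(x)\bigl(x-\lfloor x\rfloor\bigr)\,dx,\]
valid for every continuously differentiable $h$ on $[a,b]$ with $a,b\in\mathbb{Z}$. This is elementary --- it is the one-variable analogue of Young's identity $(Y1)$, equivalently integration by parts against the sawtooth $x\mapsto x-\lfloor x\rfloor+1$ --- and follows at once by telescoping, since on each unit subinterval $\int_m^{m+1}h'(x)(x-m)\,dx=h(m+1)-\int_m^{m+1}h(x)\,dx$ and the endpoints $a,b$ are integers. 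The $2d$-monotonicity required in Theorem~\ref{thm Abel-Stieltjes} plays no role here: the identity underlying that theorem needs only the smoothness of $f$.

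To connect this with Theorem~\ref{thm Abel-Stieltjes} I would record that the Riemann--Stieltjes measure produced on $[a,b]$ by $x\mapsto x-\lfloor x\rfloor+1$ is Lebesgue measure minus the counting measure carried by the integers in $(a,b]$; hence $dg$, the tensor product of two such measures, has as its fully discrete part exactly the counting measure on the lattice points $(m,n)$ with $a<m\le b$ and $c<n\le d$, which is how the double sum enters. Since $b$ and $d$ are integers, the boundary data in Theorem~\ref{thm Abel-Stieltjes} simplify to $g(b,d)=1$, $g(x,d)=x-\lfloor x\rfloor+1$ and $g(b,y)=y-\lfloor y\rfloor+1$. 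In practice, however, the cleanest route to the stated four-term identity is to bypass the two-dimensional Stieltjes bookkeeping and simply apply the one-variable formula twice.

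Concretely, I would set $h(x)=\sum_{c<n\le d}f(x,n)$, so that $h'(x)=\sum_{c<n\le d}\frac{\partial f}{\partial x}(x,n)$ by term-by-term differentiation of a finite sum of $C^1$ functions, and apply the one-variable formula in the variable $x$ to get
\[\sum_{a<m\le b}\sum_{c<n\le d}f(m,n)=\int_a^b\Bigl(\sum_{c<n\le d}f(x,n)\Bigr)dx+\int_a^b\Bigl(\sum_{c<n\le d}\tfrac{\partial f}{\partial x}(x,n)\Bigr)\bigl(x-\lfloor x\rfloor\bigr)\,dx.\]
Then I would apply the one-variable formula once more, in the variable $y$, to each of the slices $y\mapsto f(x,y)$ and $y\mapsto\frac{\partial f}{\partial x}(x,y)$, substitute, and interchange the order of integration by Fubini's theorem (all of $f$, $\frac{\partial f}{\partial x}$, $\frac{\partial f}{\partial y}$, $\frac{\partial^2 f}{\partial x\partial y}$ being continuous on the compact rectangle). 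The right-hand side then collapses to the sum of the four double integrals $\iint f$, $\iint \tfrac{\partial f}{\partial x}(x-\lfloor x\rfloor)$, $\iint \tfrac{\partial f}{\partial y}(y-\lfloor y\rfloor)$ and $\iint \tfrac{\partial^2 f}{\partial x\partial y}(x-\lfloor x\rfloor)(y-\lfloor y\rfloor)$ over $[a,b]\times[c,d]$, which is precisely the asserted equality.

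I expect the only real nuisance to be the bookkeeping at the integer lattice: one must keep the half-open ranges $a<m\le b$ and $c<n\le d$ consistent throughout, which is the same thing as tracking correctly the downward jump of the sawtooth integrator at the right and top edges of the rectangle and using the integrality of all four endpoints $a,b,c,d$ in the telescoping step. Everything past that point is routine integration by parts and a single appeal to Fubini.
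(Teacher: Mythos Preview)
Your proposal is correct. The paper derives the corollary exactly as you indicate in your opening sentence, by specializing Theorem~\ref{thm Abel-Stieltjes} to the integrator $g(x,y)=(x-\lfloor x\rfloor+1)(y-\lfloor y\rfloor+1)$, and supplies no further details beyond that. Your decision to unpack this as two iterations of the one-variable sawtooth identity is a legitimate and somewhat more elementary execution of the same idea: it sidesteps the two-variable Riemann--Stieltjes bookkeeping (decomposing $dg$ into its absolutely continuous, line-supported, and purely atomic parts and then matching terms) that a literal appeal to Theorem~\ref{thm Abel-Stieltjes} would require, at the cost of not exhibiting the result as a direct specialization of the paper's integration-by-parts formula. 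Both routes rest on the same underlying computation; yours is simply factored through the product structure of $g$.

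One small point worth making explicit: when you apply the one-variable formula to the slice $y\mapsto\frac{\partial f}{\partial x}(x,y)$ you are tacitly using that $\frac{\partial^2 f}{\partial y\,\partial x}$ exists and coincides with the hypothesized continuous derivative $\frac{\partial^2 f}{\partial x\,\partial y}$. This is precisely the Aksoy--Martelli remark the paper already invokes after Lemma~\ref{lem2d}, so it is available, but it deserves a sentence.
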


Here $\lfloor t\rfloor$ denotes the largest integer not greater than $t.$

A slightly more restrictive version of Corollary \ref{cor Abel-Stieltjes dis}
was first noticed by Rane (see \cite{R2003}, Corollary 2), who derived it from
the Euler-Maclaurin formula.

\end{document}